\newtheorem{corollary}{Corollary}[section]
\newtheorem{lemma}[corollary]{Lemma}
\newtheorem{proposition}[corollary]{Proposition}
\newtheorem{theorem}[corollary]{Theorem}
\newcommand{\Prob} {{\bf P}}
\newcommand{\Z}{{\mathbb Z}}
\newcommand{\E}{{\bf E}}
\newcommand{\R}{{\mathbb{R}}}
\newcommand{\D} {{\mathcal{D}}}
\newcommand{\dist}{{\rm dist}}
\def\H{\mathbb{H}}
\def \Im {{\rm Im}}
\def \Re {{\rm Re}}
\def \Arg{{\rm Arg}}
\mathchardef\mhyphen="2D
\begin{document}

\title{Hausdorff measure of SLE curves}

\author{Mohammad A. Rezaei
\\ Michigan State University}

\maketitle

\begin{abstract}
In this paper we prove that the Hausdorff $d$-measure of $SLE_{\kappa}$ is zero when $d=1+\frac{\kappa}{8}$ for $\kappa<8$.\\
\end{abstract}

\section{Introduction}

 A number of measures on paths or clusters on two-dimensional
lattices arising from critical statistical mechanical
models are believed to exhibit some kind of conformal
invariance in the
scaling limit.
Schramm introduced a one-parameter family of such processes,
now called the
{\em (chordal) Schramm-Loewner evolution with parameter $\kappa$ ($SLE_\kappa$)},
and showed that these give the only possible limits for conformally
invariant processes in simply connected domains satisfying a certain
``domain Markov property''.  He defined the process as a measure on curves
from $0$ to $\infty$ in $\mathbb{H}$, and then used conformal invariance
to define the process in other simply connected domains.\\

Since then, the geometric properties of SLE have been studied extensively. For $\kappa>8$ the curve is space filling so Hausdorff measure is trivial. So from now on we only consider $\kappa<8$. In \cite{Bf}, Beffara proved that the Hausdorff dimension of the $SLE_\kappa$ path is $d=1+\frac{\kappa}{8}$. His method is based on a certain two-point estimate. Following Beffara's work, several other methods used to strengthen and improve his result. Notably in \cite{LW} the authors generalized his two-point estimate to define 2-point Green's function for SLE. Also in \cite{MW} the authors use the coupling between SLE and GFF (Gaussian free field)  to find the Hausdorff dimension of several interesting subsets of SLE curve. However these methods cannot find the Hausdorff measure of the desired set. In \cite{LS}, Lawler and Sheffield defined a $d\mhyphen$dimensional measure of SLE for some values of $\kappa$ which they called \textit{natural parametrization}. Based on its properties, they conjectured the Hausdorff $d\mhyphen$measure of SLE is zero. In fact having zero measure at the dimension is a property that is shared by other random fractals like Brownian motion and fractional Brownian motion to name a few. The underlying reason is the fact that we can cover these random fractals with balls which cover a relatively large proportion of their path.  The proof in this paper uses the same idea as well. Later, in \cite{LZ} construction of natural parametrization was extended to all $\kappa<8$. Finally in \cite{LR}, the authors constructed the natural length as Minkowski content of SLE curves and studied its basic properties. In this paper, we analyze geometry of SLE curves under natural parametrization rather than capacity parametrization to show Lawler and Sheffield conjecture.\\

Here we define Hausdorff measure $\mathcal{H}^d$ so we can state our main theorem and we present SLE theory in the next section. There are a number of places where we can find the definition of Hausdorff measure, for example \cite{Law1}.\\

If $V \subset \mathbb{R}^n$ and $\alpha,\epsilon>0$, let
\[ \mathcal{H}_\epsilon^\alpha(V)=\inf\sum_{n=1}^\infty[{\rm diam}(U_n)]^\alpha,\]
where the inf is taken over all countable collection of sets $U_1,U_2,...$ with $V\subset\bigcup U_n$ and $\rm{diam}(U_n)<\epsilon$. The $\mathit{Hausdorff}\alpha\mhyphen\mathit{measure}$ is defined by
\[\mathcal{H}^\alpha(V)=\lim_{\epsilon \rightarrow 0+} \mathcal{H}_\epsilon^\alpha(V). \]

Since $ \mathcal{H}_\epsilon^\alpha(V)$ is decreasing in $\epsilon$, the limit exists with infinity as a possible value. Note that $\mathcal{H}^\alpha$ is an outer measure. Also it is easy to check that if $\mathcal{H}^\alpha(V)<\infty$, then $\mathcal{H}^\beta(V)=0$ for $\beta>\alpha$, and if $\mathcal{H}^\alpha(V)>0$, then $\mathcal{H}^\beta(V)=\infty$ for $\beta<\alpha$. The Hausdorff dimension of $V$ is defined by
\[
\dim_h(V)=\inf\{\alpha:\mathcal{H}^\alpha(V)=0\}=\sup\{\alpha:\mathcal{H}^\alpha(V)=\infty\}.
\]

By Beffara's result in \cite{Bf} we get $\mathcal{H}^{d-\epsilon}(\gamma)=\infty$ and $\mathcal{H}^{d+\epsilon}(\gamma)=0$. With these notation, the main theorem that we prove is the following.

\begin{theorem} \label{mainresult}
Suppose $\gamma$ is an SLE$_\kappa$ curve in $\mathbb{H}$ from 0 to $\infty$ and $d=1+\frac{\kappa}{8}$. Then we have

\[ \mathcal{H}^d(\gamma)=0. \]
\end{theorem}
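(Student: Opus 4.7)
The plan is to combine the natural parametrization measure $\Theta$ on $\gamma$ (from \cite{LR}) with a density-covering argument. The measure-theoretic input I would use is the following standard fact: for any finite Borel measure $\mu$ on $\R^2$ supported on a set $A$, if $\limsup_{r \downarrow 0} \mu(B(z,r))/r^d = +\infty$ for $\mu$-a.e.\ $z$, then $\mathcal{H}^d(A) = 0$. The proof is a Besicovitch-covering argument: for any $\eta > 0$, for $\mu$-a.e.\ $z$ pick a radius $r_z$ with $\mu(B(z,r_z)) \ge (\mu(A)/\eta) r_z^d$, extract a bounded-multiplicity subcover $\{B(z_i, r_i)\}$, and bound $\sum r_i^d \le C\eta$. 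Applied with $\mu = \Theta$ on bounded pieces of $\gamma$, together with $\sigma$-additivity of $\mathcal{H}^d$, this reduces Theorem~\ref{mainresult} to the density blow-up
\[ \limsup_{r \downarrow 0} \frac{\Theta(\gamma \cap B(z,r))}{r^d} = +\infty \qquad \text{for $\Theta$-a.e.\ } z \in \gamma,\ \Prob\text{-a.s.} \]

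To establish this, I would exploit the scale invariance of SLE$_\kappa$ in $\H$ from $0$ to $\infty$. Under the Palm-type measure $d\Prob\, d\Theta(z)$, the random variable $X_r(z) := \Theta(\gamma \cap B(z,r))/r^d$ has an $r$-independent distribution $\mathcal{D}$, since dilating by $1/r$ about $z$ preserves the law of $\gamma$ (up to the usual chordal conformal change) and $\Theta$ scales by $r^d$. The heart of the argument is to show that $\mathcal{D}$ has unbounded support; once this is known, a Borel--Cantelli-type argument along a dyadic sequence $r_n = 2^{-n}$, combined with the conformal Markov property to provide decoupling between scales, upgrades ``$\mathcal{D}$ has unbounded support'' to the a.s.\ limsup $= +\infty$.

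To show $\mathcal{D}$ has unbounded support, two strategies seem feasible. The first is a moment method: bound $\int X_r^k\, d\Theta$ from below via the $(k{+}1)$-point Green's function for SLE, and show these moments grow fast enough in $k$ to force a heavy tail of $\mathcal{D}$. The second is a direct construction: use the conformal Markov property and a reversibility/target-change argument to force the curve to make many excursions through a small ball $B(z,r)$, producing a positive-probability event on which $\Theta(\gamma \cap B(z,r))/r^d$ exceeds any prescribed threshold. Both routes rest on the precise scaling and one- and two-point estimates for $\Theta$ from \cite{LS, LZ, LR}.

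The step I expect to be the main obstacle is the decoupling between scales. SLE enjoys a Markov property in capacity time, but none in space, so the $X_{r_n}$ are correlated across $n$, and ``$X_{r_n}$ is large with uniformly positive probability at each scale'' does not automatically give ``$X_{r_n}$ is large infinitely often almost surely.'' Controlling these cross-scale correlations quantitatively, presumably via the two-point Green's function estimates from \cite{LZ, LR} and a zero-one / fourth-moment style Borel--Cantelli, is where the real technical work lies.
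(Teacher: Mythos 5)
Your overall strategy---show that the natural parametrization $\Theta$ piles up anomalously much mass ($\gg r^d$) near typical points of $\gamma$ and conclude $\mathcal{H}^d(\gamma)=0$ by a covering argument---is the same idea the paper uses (following the M\"orters--Peres treatment of Brownian motion). However, there are two genuine gaps. First, the measure-theoretic reduction you state is not correct as written: knowing $\limsup_{r\downarrow 0}\Theta(B(z,r))/r^d=+\infty$ for $\Theta$-a.e.\ $z$ only yields $\mathcal{H}^d(E)=0$ for the set $E$ of points where the blow-up actually occurs; the exceptional set is $\Theta$-null, but $\Theta$-null does not imply $\mathcal{H}^d$-null (a priori part of $\gamma$ could carry zero natural length yet positive $d$-measure; one does not know in advance that $\mathcal{H}^d\ll\Theta$ on $\gamma$). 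The Rogers--Taylor density theorem requires the density lower bound at \emph{every} point of the set being measured. The paper avoids this by an explicit two-part cover: maximal ``big'' squares (those with $\mu(D^0)>l^{-dk}/\epsilon$) between scales $m$ and $M$, plus \emph{all} level-$M$ squares meeting $\gamma$ that escaped the big ones; the contribution $Y_2$ of the escapees is controlled by a first-moment bound using $\Prob[\tau(D^0)<\infty]\asymp l^{M(d-2)}$ from Proposition \ref{Green2} together with the iterated conditional estimate \eqref{main1}. A second, related soft spot is your claim that $X_r(z)=\Theta(B(z,r))/r^d$ has an $r$-independent law under the Palm measure: the scaling invariance of chordal SLE is dilation about the boundary point $0$, not about an interior point $z$, so this exact stationarity is itself a nontrivial assertion about two-sided radial SLE that you would have to prove.

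Second, the two steps you flag as ``the main obstacle''---unbounded support of the mass distribution at a given scale, and decoupling across scales---are precisely where all the work in the paper lies, and your proposal leaves both unresolved. The paper handles decoupling by formulating the key estimate \eqref{main1} as a bound on the conditional probability given $\mathcal{F}_{\tau_k^*}$ \emph{and} given that the curve eventually reaches the target square $D^0$; since the time intervals $[\tau_k^*,\tau_{k+1}^*]$ are disjoint, these conditional bounds multiply to give the $q^{M-m}$ decay, with no need for second-moment or Borel--Cantelli machinery. Making the conditioning tractable is exactly what Proposition \ref{reduce} (mutual absolute continuity of conditioned SLE and two-sided radial SLE) is for. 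The ``unbounded support'' step is realized by a concrete geometric construction: with positive conditional probability the curve follows a $k$-arm spiral around the center of the square, and each of order $k$ annular arms contributes mass of order $k^{-d}$, for a total of order $k^{2-d}\to\infty$; choosing $k$ as a function of $\epsilon$ gives the required mass $l^{-dk}/\epsilon$. Your proposed moment method via multi-point Green's functions runs into the difficulty, acknowledged at the end of the paper, that higher moments of the natural parametrization were not yet available. So while the skeleton of your plan matches the paper, the reduction step needs to be replaced by the all-points covering argument, and the two hard lemmas need the conditioning-plus-spiral construction (or an equivalent) actually carried out.
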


In general if $h: [0,\infty) \rightarrow [0,\infty)$ is an increasing function such that $h(0)=0$ we can define (using the same notation as above),
\[ \mathcal{H}_\epsilon^h(V)=\inf\sum_{n=1}^\infty h[{\rm diam}(U_n)],\]
and 
\[\mathcal{H}^h(V)=\lim_{\epsilon \rightarrow 0+} \mathcal{H}_\epsilon^h(V). \]
Note that $\mathcal{H}^\alpha$ that we have defined before is an especial case of this new definition when $h(x)=x^\alpha$. Now $h$ is called an exact gauge function for $V$ if $\mathcal{H}^h(V)$ is a finite and strictly positive number. Having Theorem \ref{mainresult}, we want to raise the following question.\\

{\bf Question:} What is the exact gauge function for SLE curves?\\

For example for Brownian motion in dimension $n \geq 3$, it is well known that the exact gauge function is 
\[
h(x)=x^2\log\log \frac{1}{x}. 
\]
We expect to have a $log$ term correction for SLE as well but in order to answer this probably we need to have higher moments of natural parametrization which is related to the main conjecture of \cite{LR} about Minkowski content of SLE. We hope the methods develop here will be useful to address this finer question as well.\\








\maketitle

\section{SLE and natural parametrization}

\subsection{Schramm-Loewner evolution (SLE) and the Green's function}  \label{SLEdef}

In this section we review the definition of the chordal Schramm-Loewner evolution and introduce our notation. See \cite{Law1} for more details.\\
Consider the Loewner equation \begin{equation}  \label{loew}
\dot g_t(z) = \frac{a}{g_t(z) - V_t} , \;\;\;\;
g_0(z) = z,
\end{equation}
where $V_t$ is a continuous function. As is standard by now, when we set $V_t$ equal to Brownian motion, then we obtain SLE$_\kappa$ curve where $\kappa=\frac{2}{a}$. Throughout this paper, we consider $\gamma:(0,\infty) \rightarrow \mathbb{H}$ as an chordal SLE$_\kappa$ curve in the upper-half plane from $0$ to $\infty$ unless otherwise specified. We will use the scaling property of $SLE$ that we recall in a proposition.

\begin{proposition}  Suppose $U_t$ is a standard
Brownian motion and let $g_t$ be the solution
to the Loewner equation \eqref{loew} with $V_t = U_t$
producing the $SLE_\kappa$ curve $\gamma(t)$.
Let $r > 0$ and define
\[   \hat \gamma(t) = r^{-1} \, \gamma(r^2 t), \;\;\;\;
    \hat g_t(z) = r^{-1} \, g_{r^2t}(rz), \;\;\;\;
       \hat U_t = r^{-1} \, U_{r^2t}. \]
Then $\hat \gamma(t)$ has the same distribution as the $SLE_\kappa$.
\end{proposition}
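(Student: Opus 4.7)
The plan is a direct verification in three steps, all essentially bookkeeping from Brownian scaling and the chain rule.

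First, I would confirm that $\hat U_t = r^{-1} U_{r^2 t}$ is itself a standard Brownian motion. This is the classical Brownian scaling identity: continuity is preserved and the covariance $\E[\hat U_s \hat U_t] = r^{-2} \min(r^2 s, r^2 t) = \min(s,t)$ matches that of a standard Brownian motion, while Gaussianity of the finite-dimensional marginals is obvious.

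Second, I would differentiate $\hat g_t(z) = r^{-1} g_{r^2 t}(rz)$ in $t$ using the chain rule and the Loewner equation \eqref{loew}. The time change contributes a factor $r^2$ and the spatial rescaling a factor $r^{-1}$, so the surviving factor of $r$ in the numerator cancels against the factor of $r$ in the rescaled difference $g_{r^2 t}(rz) - U_{r^2 t} = r\,(\hat g_t(z) - \hat U_t)$, yielding $\dot{\hat g}_t(z) = a / (\hat g_t(z) - \hat U_t)$. The initial condition $\hat g_0(z) = z$ is immediate, and rescaling the hydrodynamic expansion of $g_{r^2 t}$ gives $\hat g_t(z) = z + r^{-2} a(r^2 t)/z + O(|z|^{-2})$ as $z \to \infty$, so $\hat g_t$ is the correctly normalized conformal map on its domain.

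Third, I would identify $\hat \gamma$ with the curve generated by $\hat g_t$. Since $g_{r^2 t}$ maps $H_{r^2 t} = \mathbb{H}\setminus\gamma(0, r^2 t]$ conformally onto $\mathbb{H}$, the domain of $\hat g_t$ is $r^{-1} H_{r^2 t} = \mathbb{H}\setminus \hat\gamma(0,t]$, so $\hat g_t$ is generated by the curve $\hat \gamma$. Since $\hat U_t$ is a standard Brownian motion and $\hat g_t$ solves \eqref{loew} driven by $\hat U_t$, the curve $\hat \gamma$ has the law of $SLE_\kappa$ by definition.

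There is no serious obstacle; the proposition is a formal consequence of Brownian scaling and the explicit form of the Loewner equation. The only point requiring mild care is the bookkeeping of the three factors of $r$ (from time change, spatial rescaling of $\hat g_t$, and rescaling of the driver), which combine cleanly to give the cancellation indicated above.
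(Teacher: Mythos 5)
Your verification is correct, and the bookkeeping of the three factors of $r$ (time change, spatial rescaling, driver rescaling) is exactly right. The paper states this proposition without proof, recalling it from the standard references, and your direct check via Brownian scaling plus the chain rule applied to \eqref{loew} is precisely the standard argument; nothing is missing.
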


$SLE_\kappa$ in other simply connected domains is
defined by conformal
invariance.  To be more precise, suppose that $D$ is a simply connected
domain and $w_1,w_2$ are distinct points in $\partial D$.  Let $F: \mathbb{H}
\rightarrow D$ be a conformal transformation of $\mathbb{H}$ onto $D$ with
$F(0) = w_1, F(\infty) = w_2$.  Then the distribution of
\[        \tilde \gamma(t) = F \circ \gamma(t) , \]
is that of $SLE_\kappa$ in $D$ from $w_1$ to $w_2$.  Although
the map $F$ is
not unique, the scaling invariance of $SLE_\kappa$ in $\mathbb{H}$ shows that the distribution is independent of the choice. This measure is often considered
as a measure on paths modulo reparametrization, but we can also consider
it as a measure on parameterized curves.\\

If $\gamma(t)$ is an $SLE_\kappa$ curve with transformations $g_t$
and driving function $U_t$, we write $\gamma_t = \gamma(0,t], \gamma
= \gamma_\infty$, and let $H_t$ be the unbounded component of
$\mathbb{H} \setminus \gamma_t$.  If $z \in \mathbb{H}$ by $T_z$ we mean the first time that $z \not \in H_t$. If $t < T_z$, set
\begin{equation}  \label{may24.1}
Z_t(z) = g_t(z) - U_t=X_t(z)+iY_t(z), \;\;\;\; S_t(z) = \sin \left[\arg Z_t(z)\right], \;\;\;\;
  \Upsilon_t(z) = \frac{Y_t(z)}{|g_t'(z)|}.
  \end{equation}
More generally, if $D$ is a simply connected domain and $z \in D$, we
let $\Upsilon_D(z)$ denote $1/2$  times the conformal radius of
$D$ with respect to $z$, that is, if $F:\mathbb{D} \rightarrow D$ is a
conformal transformation onto $D$ with $F(0) = z$, then $|F'(0)| = 2\Upsilon_D(z)$.
Using the Schwarz lemma and the Koebe $1/4$-theorem, we see that
\begin{equation} \label{Koebe}
 \frac{\Upsilon_D(z)}{2} \leq \dist(z,\partial D) \leq 2 \, \Upsilon_D(z) .
\end{equation}

It is easy to check that if $t < T_z$, then $\Upsilon_t(z)$ as given
in \eqref{may24.1} is the same as $\Upsilon_{H_t}(z)$.  Also,
if $z \not\in \gamma$, then
  $\Upsilon(z) := \Upsilon_{T_z-}(z) = \Upsilon_D(z)$
where $D$ denotes the connected component of $\mathbb{H} \setminus
\gamma$ containing $z$.
Similarly, if $w_1,w_2$ are distinct boundary points on a simply
connected domain $D$
and $z \in D$, we define
\[            S_D(z;w_1,w_2) = \sin[\arg f(z)] , \]
where $f: D \rightarrow \mathbb{H}$ is a conformal transformation with $f(w_1) = 0,
f(w_2) = \infty$.  If $t < T_z$, then
$S_t(z) = S_{H_t}(z;\gamma(t),\infty)$.  If $f:D \rightarrow f(D)$ is
a conformal transformation,
\[     S_D(z;w_1,w_2) = S_{f(D)}(f(z);f(w_1),f(w_2)). \]
If $\partial_1,\partial_2$ denote the two components of $\partial D \setminus
\{w_1,w_2\}$, then 
\begin{equation}  \label{hmeasure}
    S_D(z;w_1,w_2)  \asymp \min\left\{{\rm hm}_{D}(z,\partial_1),
{\rm hm}_D(z,\partial_2) \right\}.
\end{equation}
This bound can be found in \cite{LW}. Here, and throughout this paper, $\rm{hm}$ will denote harmonic measure; that is,
 ${\rm hm}_D(z,K)$ is the probability that a Brownian
motion starting at $z$ exits $D$ at $K$. Also we fix $d=1+\frac{\kappa}{8}$ and $a=\frac{2}{\kappa}$. Let
\begin{equation}  \label{green}
     G(z)
   =|z|^{d-2}  \, \sin^{\frac
  \kappa 8 + \frac 8{\kappa} -2} (\arg z)  = {\rm Im}(z)^{d-2}
    \, \sin^{4a-1}(\arg z)  ,
\end{equation}
denote the {\em (chordal) Green's function for $SLE_\kappa$ (in
$\mathbb{H}$ from $0$ to $\infty$)}.  This function first appeared  in \cite{RS}
and the combination $(d,G)$
can be  characterized up to a multiplicative constant
by the scaling rule $G(rz) =
r^{d-2} \, G(z)$ and the fact that
\begin{equation}  \label{localmart}
    M_t(z) := |g_t'(z)|^{2-d} \, G(Z_t(z))
\end{equation}
is a local martingale.
In general, if $D$ is a simply connected domain with
distinct $w_1,w_2 \in \partial D$, we define
\[    G_D(z;w_1,w_2) = \Upsilon_D(z)^{d-2} \ S_D(z;w_1,w_2)^{4a-1} . \]
The Green's function satisfies the conformal covariance rule
\[   G_D(z;w_1,w_2) = |f'(z)|^{2-d} \, G_{f(D)}(f(z);f(w_1),f(w_2)). \]
Note that if $t < T_z$, then
\[     M_t(z) = G_{H_t}(z;\gamma(t), \infty). \]
The local martingale $M_t(z)$ is not a martingale because
it ``blows up'' at time $t=T_z$.   If we stop it before that time, it
is actually a martingale.  To be precise, suppose that
\begin{equation} \label{tau}
        \tau = \tau_{\epsilon,z} = \inf\{t: \dist\{\gamma_t,z\}
 \leq \epsilon\}
 \end{equation}
 Then for every $\epsilon > 0$, $M_{t \wedge \tau}(z)$ is a
 martingale. 
The following is proved in \cite{LR}, Theorem 2.4.

\begin{proposition} \label{Green2} Suppose $\kappa < 8$, then there are $\alpha, c>0$ depending only on $\kappa$ with the following property. Suppose  $\gamma$ is a chordal $SLE_\kappa$ path from $w_1$ to $w_2$ 
in  a simply connected domain $D$, then for $z \in D$ with $\dist(z,\partial D) \geq 2 \epsilon$, 

\[    \Prob\{\dist(z,\gamma) \leq \epsilon\}
=  c\epsilon^{2-d} G_D(z;w_1,w_2)  
\, [1 + O(\epsilon^\alpha)]. \]
\end{proposition}


\maketitle

\subsection {Two-sided radial SLE} \label{two-sided}

In this subsection we give a quick review of another variation of SLE. For start consider $z \in \H$. We want to define a conformally invariant measure on curves starting from $0$ going to $\infty$ in $H$ which are passing through $z$. This is called $\mathit{two}$-$\mathit{sided \; radial \; SLE_\kappa \; through}\;z$. We can think about it as chordal $SLE_\kappa$ conditioned to go through $z$ and (first are) stops at $T_z$. Because this is an event of probability zero for $\kappa<8$, we define it through a Girsanov transformation.

First we tilt the path by the local martingale $M_t(z)$ that we defined in the previous section. By this we mean that for an event $A$ which is measurable at time $t$ define 
\[
\Prob^*[A]={\bf{E}}[1_AM_t(z)],
\]
where expectation is taken under chordal SLE. By Girsanov theorem, we get that the driving function of the Loewner equation in \eqref{loew} satisfies

\[ dV_t= \frac{(1-4a)X_t(z)}{|Z_t(z)|^2}dt+dW_t, \]
where $W_t$ is a standard Brownian motion in the weighted measure and $X_t,Z_t$ are defined as in (\ref{may24.1}). We should consider the fact that the above equation is valid until $T_z$, the first time that we hit $z$. We have the following proposition which is a modified version of Proposition 2.13 in \cite{LW}. In this version we use Euclidean distance instead of conformal distance. It can be obtained by Theorem 4.2 in \cite{LR}. See \cite{Yao} Proposition 4.2 for a detailed proof.

\begin{proposition} \label{reduce}
There exist $u>0,\; c<\infty$ such that the following is true. Suppose $\gamma$ is the chordal $SLE_\kappa$ path from 0 to $\infty$ and $z \in \mathbb{H}$. For $\epsilon \leq \rm{Im}(z)$, consider $\tau_\epsilon$ defined in \eqref{tau}. Suppose $\epsilon '<3\epsilon/4$. Let $\mu_1,\mu_2$ be the two probability measures on $\{\gamma(t): 0 \leq t < \tau_\epsilon\}$ corresponding to chordal $SLE_\kappa$ conditioned on the event $\{\tau_{\epsilon'}<\infty\}$ and the two sided radial $SLE_\kappa$ through $z$. Then $\mu_1,\mu_2$ are mutually absolutely continuous with respect to each other and the Radon-Nikodym derivative satisfies
\begin{equation*}
 \left|\frac{d\mu_2}{d\mu_1}-1\right|<c(\epsilon'/\epsilon)^u.
\end{equation*}
\end{proposition}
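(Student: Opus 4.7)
The plan is to express both $\mu_1$ and $\mu_2$ as Radon--Nikodym reweightings of the unconditioned chordal $SLE_\kappa$ law restricted to $\mathcal{F}_{\tau_\epsilon}$ and then to show the two weights differ by a multiplicative factor $1+O((\epsilon'/\epsilon)^u)$. By the Girsanov construction described just before the statement, $\mu_2$ is obtained from chordal $SLE_\kappa$ by weighting with $M_{t\wedge\tau_\epsilon}(z)/M_0(z)$. Since $\tau_\epsilon<T_z$, this stopped local martingale is a genuine martingale with mean $M_0(z)=G(z)$, so
\[
\frac{d\mu_2}{d\mu_{\mathrm{chord}}}\bigg|_{\mathcal{F}_{\tau_\epsilon}}
 = \frac{M_{\tau_\epsilon}(z)}{M_0(z)}
 = \frac{G_{H_{\tau_\epsilon}}(z;\gamma(\tau_\epsilon),\infty)}{G(z)}.
\]
For $\mu_1$, the definition of the conditional measure gives
\[
\frac{d\mu_1}{d\mu_{\mathrm{chord}}}\bigg|_{\mathcal{F}_{\tau_\epsilon}}
 = \frac{\mathbb{P}(\tau_{\epsilon'}<\infty\mid\mathcal{F}_{\tau_\epsilon})}{\mathbb{P}(\tau_{\epsilon'}<\infty)}.
\]

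By the domain Markov property for SLE, conditionally on $\mathcal{F}_{\tau_\epsilon}$ the continuation of $\gamma$ is chordal $SLE_\kappa$ from $\gamma(\tau_\epsilon)$ to $\infty$ in $H_{\tau_\epsilon}$, and the event $\{\tau_{\epsilon'}<\infty\}$ becomes the event $\{\Upsilon_{D_\infty}(z)\leq \epsilon'\}$ for that chordal SLE. I would then invoke a quantitative, uniform version of Proposition \ref{Green2}, namely
\[
\mathbb{P}\{\Upsilon_{D_\infty}(z)\leq \epsilon'\}
 = c_*\,(\epsilon')^{2-d}\,G_D(z)\,\bigl(1+O\bigl((\epsilon'/\Upsilon_D(z))^u\bigr)\bigr),
\]
with constants independent of $D$. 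Applying this to $D=H_{\tau_\epsilon}$ (where $\Upsilon_{H_{\tau_\epsilon}}(z)\asymp\epsilon$ by \eqref{Koebe}) and to $D=\mathbb{H}$ with the same small parameter $\epsilon'$, the prefactor $c_*(\epsilon')^{2-d}$ cancels and we obtain
\[
\frac{d\mu_1}{d\mu_{\mathrm{chord}}}\bigg|_{\mathcal{F}_{\tau_\epsilon}}
 = \frac{G_{H_{\tau_\epsilon}}(z;\gamma(\tau_\epsilon),\infty)}{G(z)}\,\bigl(1+O((\epsilon'/\epsilon)^u)\bigr),
\]
which matches the Radon--Nikodym derivative of $\mu_2$ up to the required relative error. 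Dividing the two derivatives gives $|d\mu_2/d\mu_1-1|\leq c(\epsilon'/\epsilon)^u$ pathwise on events of positive $\mu_1$-mass, establishing both the mutual absolute continuity and the claimed bound.

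The main obstacle is the quantitative, uniform form of Proposition \ref{Green2}: the statement reproduced from \cite{Law2} is only an asymptotic for a fixed domain as $\epsilon'\downarrow 0$, whereas what is needed here is an error term polynomial in $\epsilon'/\Upsilon_D(z)$ whose constants are uniform over the random configurations $D=H_{\tau_\epsilon}$ that can arise. I expect this to follow by tracking the rate of convergence in the original one-point estimate of \cite{Law2}: writing $\mathbb{P}(\tau_{\epsilon'}<\infty\mid\mathcal{F}_t)$ as $M_t(z)/[c_*(\epsilon')^{2-d}]$ plus a remainder and controlling the remainder via the exponential convergence to the invariant law of the Bessel-type SDE governing $(\log\Upsilon_t(z),S_t(z))$ after a logarithmic time change. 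The exponent $u$ then reflects the spectral gap of that stationary diffusion and depends only on $\kappa$.
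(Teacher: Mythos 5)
The paper gives no proof of this proposition --- it is quoted directly from \cite{LW} --- and your argument is essentially the proof given in that source: write both $\mu_1$ and $\mu_2$ as reweightings of unconditioned chordal $SLE_\kappa$ restricted to $\mathcal{F}_{\tau_\epsilon}$ (via the stopped martingale $M_{t\wedge\tau_\epsilon}(z)/M_0(z)$ for $\mu_2$, and the ratio of conditional to unconditional probabilities of $\{\tau_{\epsilon'}<\infty\}$ for $\mu_1$, the latter evaluated through the domain Markov property), then compare the two densities through the one-point Green's function estimate. The single nontrivial input you flag --- upgrading Proposition \ref{Green2} to $\mathbb{P}\{\Upsilon_{D_\infty}(z)\le\epsilon'\}=c_*(\epsilon')^{2-d}G_D(z)\bigl(1+O((\epsilon'/\Upsilon_D(z))^u)\bigr)$ with constants uniform over configurations --- is exactly the form in which the estimate is established in \cite{LW} and \cite{Law2}, with $u$ arising from the exponential rate of convergence to equilibrium of the time-changed argument (radial Bessel) diffusion, so your proposal is sound modulo that citation.
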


Whenever we want to consider measure with respect to two-sided radial SLE we use $\Prob^*_z$.














\subsection{Natural parametrization}

As we mentioned above, the natural parametrization defined in \cite{LS} is a candidate for the geometric parametrization of SLE curves which arise in the scaling limits of various discrete models. Its scaling exponent is $d$. Also under some mild conditions, it is the only parametrization that satisfies certain natural conditions (\cite[Proposition 2.2]{LS}). The original definition goes as follows.\\

 Suppose that $D$ is a bounded domain and $z,w$ are
distinct boundary points. Let $\gamma$ denote an $SLE_\kappa$ curve from $z$ to $w$ in $D$ and $t$ is the capacity parametrization inherited from $\mathbb{H}$. Set
\[                \Psi_t(D) = \int_{D_t} G_{D_t}(\zeta;\gamma(t),w)\,  d A(\zeta).
\]

For simplicity, assume that $\Psi_0(D)<\infty$. Because $G_{D_t}(z)$ is a positive local martingale, we get that $\Psi_t(D)$ is a supermartingale. By the Doob-Meyer decomposition, under some conditions, we get a unique increasing process $\Theta_t(D)$ such that $\Psi_t(D)+\Theta_t(D)$ is a martingale. $\Theta_t(D)$ is called $\mathit{the \; natural \; parametrization}$ for $SLE_\kappa$ in $D$.\\

The original definition given in \cite{LS} is only valid in $\mathbb{H}$. They use some cutoff argument instead of assuming $\Psi_0(D)<\infty$ (they define it on a bounded subdomain in $\mathbb{H}$ instead of $\mathbb{H}$). In \cite{LR}, as we mentioned earlier, the authors constructed natural length in terms of Minkowski content and studied basic properties of SLE curves under that. We recall the results from \cite{LR} that we will be needed here.\\

\begin{proposition} \label{natscaling}

If $F:\mathbb{H} \rightarrow D$ is a conformal map, $z,w \in \partial D$, such that $F(0)=z$ and $F(\infty)=w$, then we get
\[
\hat{\Theta}_t(D)= \int_0^t |F'(\gamma(s))|^d d\Theta_s.
\]
where $\hat{\Theta}$ and $\Theta$ are the natural parametrizations in $D$ and $\mathbb{H}$ respectively.
\end{proposition}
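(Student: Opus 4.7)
The plan is to identify $\int_0^t |F'(\gamma(s))|^d\, d\Theta_s$ as the Doob--Meyer compensator of $\hat\Psi_t(D)$, and then invoke the uniqueness of the decomposition that defines $\hat\Theta_t(D)$.

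The first step is to translate $\hat\Psi_t(D) = \int_{D_t} G_{D_t}(\eta;\hat\gamma(t),w)\, dA(\eta)$ into an integral over $H_t$. Writing $\eta = F(\zeta)$, so that $D_t = F(H_t)$, $\hat\gamma(t) = F(\gamma(t))$, and $w = F(\infty)$, the conformal covariance rule for $G$ applied to $f = F^{-1}$ gives $G_{D_t}(\eta;\hat\gamma(t),w) = |F'(\zeta)|^{d-2}\, M_t(\zeta)$ with $M_t(\zeta) = G_{H_t}(\zeta;\gamma(t),\infty)$; combined with $dA(\eta) = |F'(\zeta)|^2\, dA(\zeta)$ this collapses the Jacobian factors into $|F'(\zeta)|^d$ and yields
\[
\hat\Psi_t(D) = \int_{H_t} |F'(\zeta)|^d\, M_t(\zeta)\, dA(\zeta).
\]

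The second step is to compute the compensator of this integral against the weighted area measure $d\mu(\zeta) = |F'(\zeta)|^d\, dA(\zeta)$. The pointwise fact underlying the construction of $\Theta$ in \cite{LS} and \cite{LR} is that for any bounded Borel $V \subset \mathbb{H}$, the compensator of $\int_{V \cap H_t} M_t(\zeta)\, dA(\zeta)$ is $\int_0^t \mathbf{1}_V(\gamma(s))\, d\Theta_s$, interpreted as the natural time the curve spends in $V$. By a standard monotone class / simple-function approximation, this extends to every nonnegative Borel weight $\rho$: the compensator of $\int_{H_t} \rho(\zeta)\, M_t(\zeta)\, dA(\zeta)$ is $\int_0^t \rho(\gamma(s))\, d\Theta_s$. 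Specializing to $\rho = |F'|^d$ identifies the compensator of $\hat\Psi_t(D)$ as $\int_0^t |F'(\gamma(s))|^d\, d\Theta_s$, and by the uniqueness clause of Doob--Meyer this equals $\hat\Theta_t(D)$.

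The main obstacle is justifying the Fubini-type exchange in the second step: $M_t(\zeta)$ is only a local martingale that blows up as $t \uparrow T_\zeta$, while $|F'|^d$ can be unbounded near $\partial\mathbb{H}$, and the total $\mu$-mass of $\mathbb{H}$ may be infinite. The remedy is to first stop at the times $\tau_{\epsilon,\zeta}$ from \eqref{tau}, where $M_{t \wedge \tau}(\zeta)$ is a genuine martingale, exhaust $\mathbb{H}$ by compact subdomains $V_n$ on which $\mu$ and $\Psi$ are finite, interchange the Lebesgue and conditional-expectation integrals on these truncations, and finally pass to the limit using the integrability estimates for $\Psi$ and $\Theta$ established in \cite{LS} and \cite{LR}. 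Once these cutoffs are removed the identification of the compensator, hence of $\hat\Theta_t(D)$, is immediate.
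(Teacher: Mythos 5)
The paper does not prove this proposition at all --- it is recalled, without proof, from \cite{LR}, and the argument there follows essentially the route you describe: pull $\hat\Psi_t(D)$ back to $\int_{H_t}|F'(\zeta)|^d\,M_t(\zeta)\,dA(\zeta)$ via the conformal covariance rule for $G$ and the Jacobian $|F'|^2$, identify $\int_0^t |F'(\gamma(s))|^d\,d\Theta_s$ as the compensator by monotone approximation from indicator weights, and conclude by the uniqueness in the Doob--Meyer decomposition. Your outline is correct; the only inputs you take on faith --- the occupation-measure identity $\Theta_t(V)=\int_0^t \mathbf{1}_V(\gamma(s))\,d\Theta_s$ for Borel $V$ and the integrability needed to remove the cutoffs --- are exactly the facts supplied by \cite{LS}, \cite{LZ}, \cite{LR}, so flagging them as cited rather than reproved is appropriate.
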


This proposition basically shows the $d$-dimensional scaling that we expect from the natural parametrization. The next proposition establishes additivity of the natural length.\\

\begin{proposition} \label{additivity}

Consider $\gamma$ as an $SLE$ curve from $0$ to $\infty$ in $\mathbb{H}$ and $r>0$. Introduce
\[ \gamma^r(t)=\gamma(t+r).\]
Then if $\tau$ is a stopping time and $0<s<t$, we have
\[ \Theta_{t+\tau}-\Theta_{s+\tau}=\Theta_t^\tau-\Theta_s^\tau\]
where $\Theta^\tau$ is the natural parametrization in the domain $H_\tau$.

\end{proposition}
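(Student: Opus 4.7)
Setting $s=0$ and then subtracting reduces the claim to showing $\Theta_{\tau+t}-\Theta_\tau=\Theta^{\tau}_{t}$ for every $t\ge 0$. My strategy is to exhibit both sides as the Doob--Meyer compensator of the same supermartingale, so that uniqueness of the compensator forces equality.

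By the strong domain Markov property of chordal SLE, conditional on $\mathcal{F}_\tau$ the shifted curve $\gamma^{\tau}(t)=\gamma(\tau+t)$ is an SLE$_\kappa$ in $H_\tau$ from $\gamma(\tau)$ to $\infty$, parametrized by its own capacity (additivity of half-plane capacity under the Loewner flow guarantees that time $t$ in $H_\tau$ corresponds to time $\tau+t$ in $\mathbb{H}$, so the two capacity parameters really do agree on $\gamma^\tau$). Following the construction recalled in \cite{LR}, $\Theta^{\tau}$ is then uniquely characterized as the continuous, increasing, adapted process such that $\Psi_{t}(H_\tau)+\Theta^{\tau}_{t}$ is a martingale in $\{\mathcal{F}_{\tau+t}\}_{t\ge 0}$, where
\[
  \Psi_{t}(H_\tau)=\int_{H_{\tau+t}}G_{H_{\tau+t}}(\zeta;\gamma(\tau+t),\infty)\,dA(\zeta).
\]

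The decisive observation is that $G_{H_{\tau+t}}(\,\cdot\,;\gamma(\tau+t),\infty)$ is determined entirely by the triple $(H_{\tau+t},\gamma(\tau+t),\infty)$, so the integrand in $\Psi_{t}(H_\tau)$ agrees pointwise with that in the $\tau$-shift of $\Psi_{s}(\mathbb{H})$; the two supermartingales are literally the same. Because the $\tau$-shift of a martingale remains a martingale in the shifted filtration and $\Theta_\tau$ is $\mathcal{F}_\tau$-measurable, the process $t\mapsto\Psi_{\tau+t}(\mathbb{H})+(\Theta_{\tau+t}-\Theta_\tau)$ is a martingale in $\{\mathcal{F}_{\tau+t}\}_{t\ge 0}$. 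Hence both $\Theta_{\tau+t}-\Theta_\tau$ and $\Theta^{\tau}_{t}$ are continuous increasing adapted processes starting at $0$ that compensate the same supermartingale, and uniqueness of the Doob--Meyer decomposition within this class yields the desired identity.

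The main obstacle is that $\Psi_{s}(\mathbb{H})=\infty$ for every $s$, so Doob--Meyer cannot be invoked on these supermartingales directly. One must run the argument within the cutoff construction of \cite{LS,LR}: replace the integral in $\Psi$ by one over $H_s\cap A$ for a bounded set $A\subset H_\tau$ to obtain a finite supermartingale, establish the identity $\Theta^{(A)}_{\tau+t}-\Theta^{(A)}_{\tau}=\Theta^{\tau,(A)}_{t}$ for the restricted compensators as above, and then exhaust $H_\tau$ by an increasing sequence of such $A$. Verifying that the monotone limits of these restricted processes match up on both sides of the identity, and that the stopping-time shift interacts correctly with the exhaustion, is the delicate technical point; once this is handled, the intrinsic character of the Green's function forces the proposition.
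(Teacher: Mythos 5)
The paper does not prove this proposition; it is recalled verbatim from \cite{LR} as a known property of the natural parametrization. Your argument --- identify $\Psi_t(H_\tau)$ with the time-shifted $\Psi_{\tau+t}(\mathbb{H})$ via the domain Markov property and conformal/domain intrinsicness of $G$, then conclude by uniqueness of the Doob--Meyer compensator, handling the infinite-mass issue by the cutoff/exhaustion construction --- is essentially the proof given in that reference, and the one technical point you flag (finiteness and class (D) for the truncated supermartingales, and compatibility of the exhaustion with the shift) is indeed where the real work lies there.
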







\maketitle

\section{Proof of  Theorem \ref{mainresult}}

To prove Theorem \ref{mainresult}, we follow the proof in \cite{MP} for Brownian motion. The strategy is to cover the path of SLE curve with a suitable set of circles which have small $d\mhyphen$measure. In order to do this, following the same method for Brownian motion, we consider the circles that SLE spend an unusually long time in them. We show that there is a good chance that this happens. The necessary estimate \eqref{main1} below which shows this, is more complicated here than in the proof for Brownian motion, due to the fact that SLE is not Markovian.\\

Take $A=[0,1] \times [0,1]$ and $z^0=-\frac{1}{2}+2i$. We prove that $\mathcal{H}^d(\gamma \cap (A+z^0))=0$ with probability 1. The same proof works for $[-m,m] \times [i/m,mi]$ for any $m$, which gives the theorem. Let  $B \subset A$. From now on $B^0$ will mean $B+z^0$. Define
\begin{equation}
\mu(B^0)=\int_0^\infty 1(\tilde{\gamma}(s) \in B^0)ds \;\;\; {\rm for} \; B \subset A,
\end{equation}
where $\tilde{\gamma}(s)$ is the reparametrization of $\gamma(s)$ by natural parametrization and $1(A)$ or $1\{A\}$ is used for indicator function of an event $A$. In other words
\[
\tilde{\gamma}(s)=\gamma(t) \;\; {\rm where} \;\;\; s=\Theta(t).
\]
Note that by this we can write
\[\Theta_\tau(V)=\int_0^\tau 1(\tilde{\gamma}(s) \in V)ds.\]

Fix a big integer $l$ throughout the section which we determine later. For $k>0 \in \Z$, let $\mathcal{S}_k$ be the set of all squares of the form
\[ [n_1l^{-k},(n_1+1)l^{-k}] \times [n_2l^{-k},(n_2+1)l^{-k}], \]
where $n_i \in \{0,1,...,l^k-1\}$. Take $\mathcal{D}_k$ as the set of circumcircles of the squares in $\mathcal{S}_k$. As above, when we write $\mathcal{D}^0_k$ we mean $\mathcal{D}_k+z^0$. Take $\epsilon>0$. Fix $m<M$. We will find them as functions of $\epsilon$ in the proof. We say $D^0 \in \mathcal{D}^0_k$ is $\mathit{big}$ (with respect to $\epsilon$) if
\begin{equation*}
\mu(D^0)>\frac{l^{-dk}}{\epsilon}.
\end{equation*}
Consider the following cover $\mathcal{C}(m,M,\epsilon)=\mathcal{C}$ of $\gamma \cap A^0$.
\begin{itemize}

\item  Maximal big circles $D^0 \in \mathcal{D}^0_k$, $m \leq k <M$, i.e., all big circles that are not contained in another big one for $m \leq k <M$.

\item  Circles in $\D^0_M$ that are not contained in any big circle $D_k$ for $m \leq k <M$ but intersect $\gamma$.

\end{itemize}
This is a cover with sets of diameter at most $\sqrt{2}l^{-m}$. Using this cover we can see that there is $c>0$ such that

\[ c\mathcal{H}^d(\gamma \cap A^0) \leq \left[ \lim_{\epsilon \downarrow 0} \sum_{k=m}^{M-1}\sum_{D^0 \in \mathcal{D}^0_k \cap \mathcal{C}} l^{-dk}+\sum_{ D^0 \in \mathcal{D}^0_M \cap \mathcal{C}} l^{-dM}\right]= \lim_{\epsilon \downarrow 0}[Y_1+Y_2], \]
where
\[ Y_1=\sum_{k=m}^{M-1}\sum_{D^0 \in \mathcal{D}^0_k \cap \mathcal{C}} l^{-dk},
\]
and $Y_2$ is the remainder. For each $\epsilon>0$ we will find $m=m(\epsilon)$ and $M=M(\epsilon)$ such that
\[ \lim_{\epsilon \downarrow 0}\E[Y_1+Y_2]=0. \]
In order to do this we start by estimating $\E[Y_2]$. Put $D^0=D^0_M \subset D^0_{M-1} \subset \cdots \subset D^0_m$ with $D^0_k \in \D^0_k$. For any circle $D$, Let $D^*$ be the circle with same center as $D$ and $100$ times its radius.
Put $\tau_k=\inf\{t>0: \gamma(t) \in D^0_k\}$ and $\tau_k^*=\inf\{t>0: \gamma(t) \in (D^0_k)^*\}$.
We claim that we can take $l$ such that, for every $\epsilon>0$, there is $q=q_\epsilon<1$ such that

\begin{equation} \label{main1}
\Prob\left[\int_{\tau_k^*}^{\tau_{k+1}^*} 1_{D^0_k}(\tilde{\gamma}(s))ds<\frac{l^{-dk}}{\epsilon}\; | \; \tau_{D^0}<\infty, \mathcal{F}_{\tau_k^*}\right]<q,
\end{equation}
for every $m \leq k<M-1$. We will prove this in Lemma 3.1 below. Given this, if   $\tau(D^0)<\infty$, then the probability that $D^0$ is not contained in any big circle is less than $q^{M-m}$, that is
\[ \Prob[D^0 \in \mathcal{C} | \; \tau(D^0)< \infty]< q^{M-m},\]
Combining this with the fact that $\Prob[\tau(D^0)<\infty]\asymp l^{M(d-2)}$ by Proposition \ref{Green2}, we get
\begin{equation} \label{Y_1}
\E[Y_2]<cl^{-Md}l^{2M}q^{M-m}l^{M(d-2)}=cq^{M-m}.
\end{equation}
To estimate $\E[Y_1]$, we have

\begin{equation} \label{Y_2}
Y_1= \sum_{k=m}^{M-1} cl^{-dk} \sum_{D^0 \in \mathcal{C} \cap \D^0_k} 1\{\mu(D^0)> l^{-dk}\frac{1}{\epsilon}\}  <c\epsilon \sum_{k=m}^{M-1}\sum_{D^0 \in \mathcal{C} \cap \D^0_k} \mu(D^0) <5c\epsilon \mu(A^0).
\end{equation}
We have the first inequality because of
\[
l^{-dk}1\left\{\mu(D^0)>l^{-dk}\frac{1}{\epsilon}\right\}<\epsilon \mu(D^0).
\]
Also we have the second inequality because $z$ can not be in 5 different maximal circles. Because $\E[\mu(A^0)]<\infty$, then by \eqref{Y_1} and \eqref{Y_2}, we can find $m$ and $M$ such that
\[ \lim_{\epsilon \downarrow 0}\E[Y_1+Y_2]=0. \]

In other words, we infer that the expected Hausdorff $d-$measure of our cover converges to 0 with a suitable choice of $M=M(\epsilon), m=m(\epsilon)$ as $\epsilon \downarrow 0$. Hence a subsequence converges to zero w.p.1, and as $m$ was arbitrary, we get $\mathcal{H}^d(\gamma \cap A^0)=0$ almost surely. In the rest of the section we prove \eqref{main1} to complete the proof of Theorem \ref{mainresult}. By scaling it is enough to prove the following lemma.

\begin{figure}[H]
		
	\begin{center}
		\labellist
		\small
			\pinlabel $D$ [br] at 300 180
			\pinlabel $D_1$ at 300 220
			\pinlabel $D_1^*$ at 260 260
			\pinlabel $D_2$ at 400 160
			\pinlabel $D_2^*$ at 460 160
			\pinlabel $\gamma_{\tau(D_2^*)}$ at 135 180
			\endlabellist
		\includegraphics[width=4in]{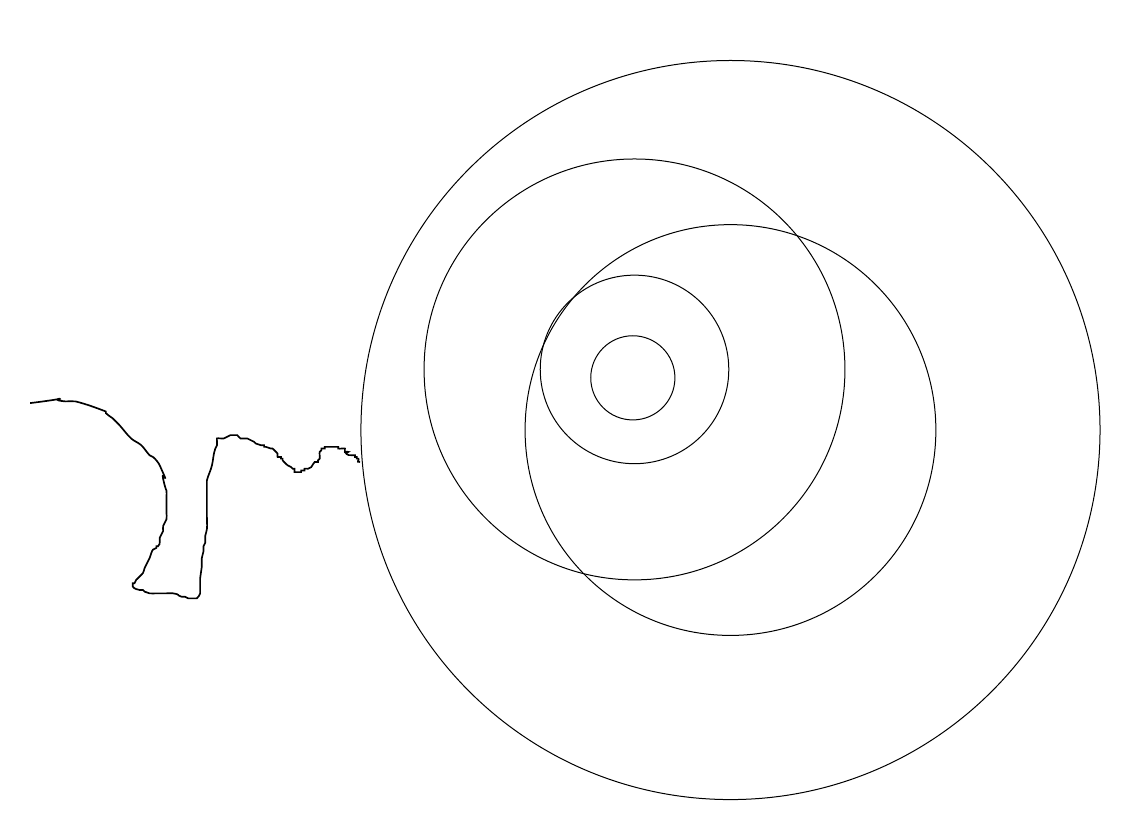}
		\caption{Lemma \ref{circles}}\label{fig1}
	\end{center}
\end{figure}

\begin{lemma} \label{circles}

There exists $l$ large such that the following happens. Suppose $D \subset D_1 \subset D_2$ are three circles in $\H$ with centers at $O,O_1$ and $O_2$ respectively. Suppose the radius of $D_2$ is 1 and the radius of $D_1$ is $1/l$. Also put $D^*_1$ and $D^*_2$ as above i.e they are circles with centers at $O_1$ and $O_2$ and radius $100/l$ and $100$ respectively. Then for every $N>0$ there exists $q>0$ such that

\begin{equation} \label{mainest}
\Prob\left[\Theta_{\tau(D^*_1)}(D_2)>N | \tau_{D}<\infty,\mathcal{F}_{\tau(D^*_2)}\right]>q.
\end{equation}

\end{lemma}

\begin{proof}
Let $\tau'=\inf\{t| \Upsilon_t(O_2)=20\}$. Note that

\begin{equation} \label{distest}
\Upsilon_{\tau(D^*_2)}(O_2)>25 \;\;\;\;\;\; 10<\dist(O_2,\gamma_{\tau'})<40,
\end{equation}
by \eqref{Koebe}. Define the event $E=\{S_{\tau'}(O_2)>1/4\}$. Recall that $\Prob_z^*$ is the measure of two-sided radial SLE through $z$. By Lemma 2.2 in \cite{LZ}, there is $p_1>0$, such that

\[ \Prob_{O_2}^*[E|\mathcal{F}_{\tau(D^*_2)}]>p_1.   \]
Note that until time $\tau',$ $\Prob_{O_2}^*$ and $\Prob_{O}^*$ are mutually absolutely continuous with bounded Radon-Nikodym derivative. Therefore
\[ \Prob_{O}^*[E|\mathcal{F}_{\tau(D^*_2)}]>p'_1,   \]
for some $p'_1$. So by Proposition \ref{reduce}

\[ \Prob[E|\mathcal{F}_{\tau(D^*_2)},\;\tau(D)<\infty]>p_2,\]
for some $p_2>0$. Put $Z_{\tau'}: H_{\tau'} \rightarrow \H$ as in \eqref{may24.1}. If $U=D^*_1 \cup D_2$, then there is a universal $\theta_0>0$ such that if $z \in U$ then $\sin(\Arg(Z_{\tau'}(z)))>\theta_0$ on the event $E$.\\

Note that for $l$ large enough, there is a ball $V$ of radius $1/50$ in $D_2$ which has distance at least $1/3$ to $D^*_1$. Under the event $E$, let $f = bZ_{\tau'}$ where $b>0$ is chosen so that $|f'(O_2)|=1$.  By the distortion theorem ( see \cite[Section 3]{Law1}), for some constant $c>0$, we have

\begin{equation} \label{derest}
\frac{1}{c}<|f'(z)|<c,
\end{equation}
for any $z \in U$. Take $l$ large enough such that $2cl^{-1}<\min\{\frac{1}{40c},\frac{1}{10000}\}$. By Propositions \ref{natscaling} and \ref{additivity}, it is suffices to show

\begin{equation*}
\Prob \left[\Theta_{\tau^*}(f(V))>N|\tau(f(D))<\infty \right]>p>0,
\end{equation*}
where $\tau^*$ is the first time that SLE hits $f(D^*_1)$.\\

By \eqref{derest}, on the event $E$, there is $n$ fixed such that $f(U) \subset R=[-n,n] \times [i/n,ni]$. Also there are fixed $r_1,r_2,\delta>0$ and $z,w \in f(U)$ such that $B_{r_1}(z) \subset f(V),\;f(D^*_1) \subset B_{r_2}(w) \subset R$ and we have $\dist(z,w)=r_1+r_2+\delta$. By taking $L=f(D)$ we need to show the following lemma to finish the proof.

\end{proof}

\begin{lemma}
For every $C,n,\delta>0$, there exists $p>0$ such that the following holds.
If $r_1>\delta$, $r_2>0$, $\dist(z,w)>r_1+r_2+\delta$,
\[B_{r_1}(z),B_{r_2}(w) \subset \left[-n,n\right] \times \left[\frac{i}{n},ni\right],\]
and $L \subset B_{r_2}(w)$, then

\begin{equation} \label{main2}
\Prob[\Theta_{\tau_L}(B_{r_1}(z))>C| \tau_L<\infty]>p.
\end{equation}

\end{lemma}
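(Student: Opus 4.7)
The plan combines a reduction to two-sided radial SLE with an iterative Paley--Zygmund scheme that forces arbitrarily large occupation of $B_{r_1}(z)$ before $\tau_L$.

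First I would pick a reference point $w_0\in L$ and use Proposition \ref{reduce} to transfer the conditioning on $\{\tau_L<\infty\}$ to the two-sided radial measure $\Prob^*_{w_0}$. Since $r_1>\delta$ and $\dist(z,w)\geq r_1+r_2+\delta$, the ball $B_{r_1}(z)$ stays at distance $\geq \delta$ from $w_0$, so up to a stopping time at which the conformal radius at $w_0$ is of order $\delta$ the chordal measure conditioned on $\{\tau_L<\infty\}$ and $\Prob^*_{w_0}$ are mutually absolutely continuous with bounded Radon--Nikodym derivative. It therefore suffices to lower bound $\Prob^*_{w_0}\bigl[\Theta_{\tau_L}(B_{r_1}(z))>C\bigr]$.

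The main technical step will be a \emph{single-step} lower bound: there exist constants $\eta>0$ and $q_0\in(0,1)$, depending only on $n$ and $\delta$, such that for any stopping time $\sigma<\tau_L$ satisfying a mild geometric hypothesis (e.g.\ $B_{r_1}(z)\cap H_\sigma$ contains a disk of radius $\delta/4$ whose center is at distance $\geq \delta/4$ from both $\partial H_\sigma$ and $B_{r_2}(w)$),
\[
\Prob^*_{w_0}\bigl[\Theta_{\tau_L}(B_{r_1}(z))-\Theta_\sigma(B_{r_1}(z))>\eta \mid \mathcal{F}_\sigma\bigr]\;\geq\;q_0.
\]
I would prove this by a Paley--Zygmund argument applied to the restarted Loewner evolution in $H_\sigma$ via Proposition \ref{additivity}. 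The conditional first moment is bounded below by integrating the in-$H_\sigma$ Green's function over the protected sub-disk, yielding a uniform $c_1(n,\delta)>0$; the conditional second moment is bounded above by a two-point SLE Green's function estimate of the kind used by Beffara, giving $C_2(n,\delta)<\infty$. Paley--Zygmund then delivers $\eta=c_1/2$ and $q_0=c_1^2/(4C_2)$. Iterating along stopping times $\sigma_0=0<\sigma_1<\cdots <\sigma_N$ defined so that the occupation of $B_{r_1}(z)$ gains $\eta$ units between consecutive $\sigma_k$'s (truncated at $\tau_L$), and taking $N\eta>C$, yields $\Prob^*_{w_0}[\sigma_N<\tau_L]\geq q_0^N$; on $\{\sigma_N<\tau_L\}$ we have $\Theta_{\tau_L}(B_{r_1}(z))>C$, which closes the argument after transferring back via Proposition \ref{reduce}.

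The main obstacle will be verifying the geometric hypothesis on $\sigma_k$ at each iteration: the slit domain $H_{\sigma_k}$ could in principle slice $B_{r_1}(z)$ into thin ribbons so that no protected macroscopic sub-disk survives, and then the first-moment lower bound degenerates. I would handle this by refining the definition of $\sigma_{k+1}$ to include a geometric good event requiring the curve to exit $B_{r_1}(z)$ after its $\eta$-increment without further cutting a pre-chosen sub-disk, and by using distortion estimates (in the spirit of \eqref{derest} in the preceding lemma) together with Proposition \ref{reduce} to show that this extra conditioning merely reduces $q_0$ by a positive multiplicative factor while keeping it uniform in $k$.
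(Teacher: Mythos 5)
Your overall reduction to two-sided radial SLE via Proposition \ref{reduce} matches the paper, but the core of your argument --- the iterated single-step Paley--Zygmund bound --- has a genuine gap exactly at the point you flag and then defer. The single-step estimate needs, at every stopping time $\sigma_k$, a protected sub-disk of \emph{fixed} radius $\delta/4$ inside $B_{r_1}(z)\cap H_{\sigma_k}$, lying in the same complementary component as the tip and the target and at macroscopic distance from the slit; otherwise the conditional first moment $\int_{B_{r_1}(z)\cap H_{\sigma_k}} G_{H_{\sigma_k}}\,dA$ is not bounded below and $q_0$ is not uniform. The second-moment method gives you control of the occupation increment but no control whatsoever over the geometry of $H_{\sigma_{k+1}}$: after the curve has crossed $B_{r_1}(z)$ order $N$ times (and $N=C/\eta\to\infty$ since $C$ is arbitrary), the components of $B_{r_1}(z)\setminus\gamma$ can all have small diameter, so no fixed-radius protected disk survives. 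Your proposed repair --- conditioning each step on a ``good event'' that the curve gains $\eta$ \emph{and} leaves a pre-chosen sub-disk uncut --- is precisely the statement that needs proof, and proving that this joint event has probability bounded below uniformly in $k$ is the entire difficulty; Paley--Zygmund cannot produce it because it says nothing about \emph{where} in $B_{r_1}(z)$ the occupation is accumulated or how the curve is configured afterwards.

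The paper avoids iteration altogether by an explicit one-shot geometric construction: it forces the curve, with positive (though $k$-dependent) probability, to follow a tube around an explicit path ending in a $k$-arm spiral $\lambda(t)=e^{2\pi k t}(1-t/2)+z$ inside $B_{r_1}(z)$. On that event the spiral creates, along each of roughly $k$ rays, about $k$ crosscuts $l_i^j$ between consecutive arms which the curve must eventually cross; each crossing gives, essentially independently and with probability $p$ uniform in $j,k$, at least $ck^{-d}$ of natural time (via the Green's function at scale $1/k$ and Proposition \ref{natscaling}), for a total of order $k^{2-d}\to\infty$. Because the lemma only requires $p=p(C)>0$, a single very-low-probability event that \emph{deterministically sets up} $k^2$ independent occupation opportunities suffices, and the measure changes (to $\Prob^*_w$ and back to conditioning on $\tau_L<\infty$) are handled once via boundedness of $M_t(w)$ and Proposition \ref{reduce}. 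If you want to salvage your scheme, you would need to replace the abstract ``good geometric event'' by some such explicit winding construction --- at which point you have essentially rederived the paper's proof.
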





\begin{proof}

By scaling we assume that $\delta=1$. Also for the moment let us assume $z$ is fixed. Later, a compactness argument will imply that we may choose $p$ independent of $z$. To show \eqref{main2}, we define a geometric event which has positive probability such that the natural length is big on it with positive probability. Note that we can make $r_1$ smaller so we can assume that $r_1=1$. Consider the following path $\iota_1$ from $0$ to a point at distance $1/2$ of $z$. Take

\begin{figure}[H]

\begin{center}
	\labellist	
	\small
	\pinlabel $z$ [br] at 140 155
	\pinlabel $l$ at 260 115
	\pinlabel $h(B)$ at 230 83
	\endlabellist
	\includegraphics[width=4in]{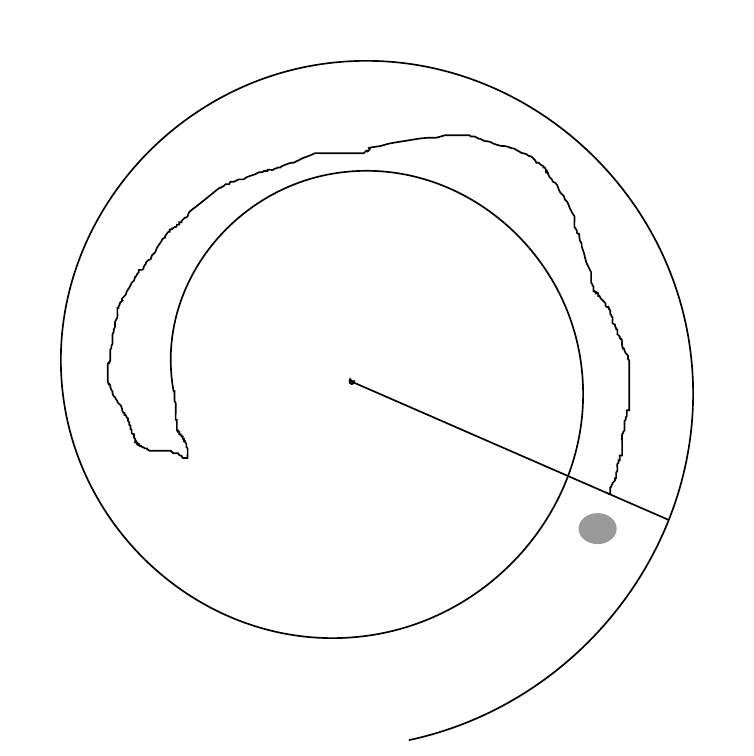}
	\caption{Spiral $\lambda$}\label{fig2}
	\end{center}
\end{figure}

\[
\iota_1=\left[0,\frac{i}{2n}\right] \cup \left[\frac{i}{2n},n+\frac{i}{2n}\right] \cup \left[n+\frac{i}{2n},n+\Im(z)i\right] \cup \left[n+\Im(z)i,z+1\right] \cup \lambda,
\]
where $\lambda$ is a spiral around $z$ as described below (see Fig. \ref{fig2}). 






Define $\lambda:[0,1] \rightarrow B_1(z)$ by $\lambda(t)=e^{2i\pi kt}(1-\frac{t}{2})+z$. We will determine $k$ based on $C$ later but for now it is a large number satisfying  $\frac{1}{k^2}<\frac{1}{100}$. Let $\iota_2$ be the curve resulting from the reflection of the construction of $\iota_1$ about the $y\mhyphen$axis. We use $\iota_2$ if $\Re(w)>\Re(z)$ and otherwise we use $\iota_1$. Assume  $\Re(w)>\Re(z)$. Since $\dist(z,w)>2$, the construction of $\iota_2$ ensures that 
\begin{equation} \label{iota}
\dist(\iota_2,B_{r_2}(w)) \geq \delta',
\end{equation}
for some $\delta'>0$. Consider the strip $S=\{z \in \mathbb{H}\;|\; \dist(z,\iota_2)<\frac{1}{k^2}\}$.
Consider the event $V$ that SLE curve $\gamma$ stays in $S$ until the time $\tau$ that it reaches distance $1/2+\frac{1}{k^2}$ of $z$. We have
\[ P[V]>0. \]
We will justify this later. By \eqref{iota} we have $\{M_t(w)\; ; t<\tau\}$ is bounded away from zero and $\infty$ if $\gamma$ stays in $S$. Here is the reason. Since $\dist(z,w)>2$, the harmonic measure of $[0,\infty]$ from $w$ is bounded below just by considering the fact that Brownian motion can stay away from $S$ until it reaches $[0,\infty]$. The same argument holds for $[-\infty,0]$ as well. So we get
\[ P^*_w[V]>0. \]
Therefore by Proposition \ref{two-sided}, we get
\[
P[V|\tau(L)<\infty]>0.
\]
\begin{figure}[H]
	
	\begin{center}
		\labellist
		\small
			\pinlabel $\iota_1$ at 350 45
			\pinlabel $0$ at 271 10
			\endlabellist
		\includegraphics[width=4in]{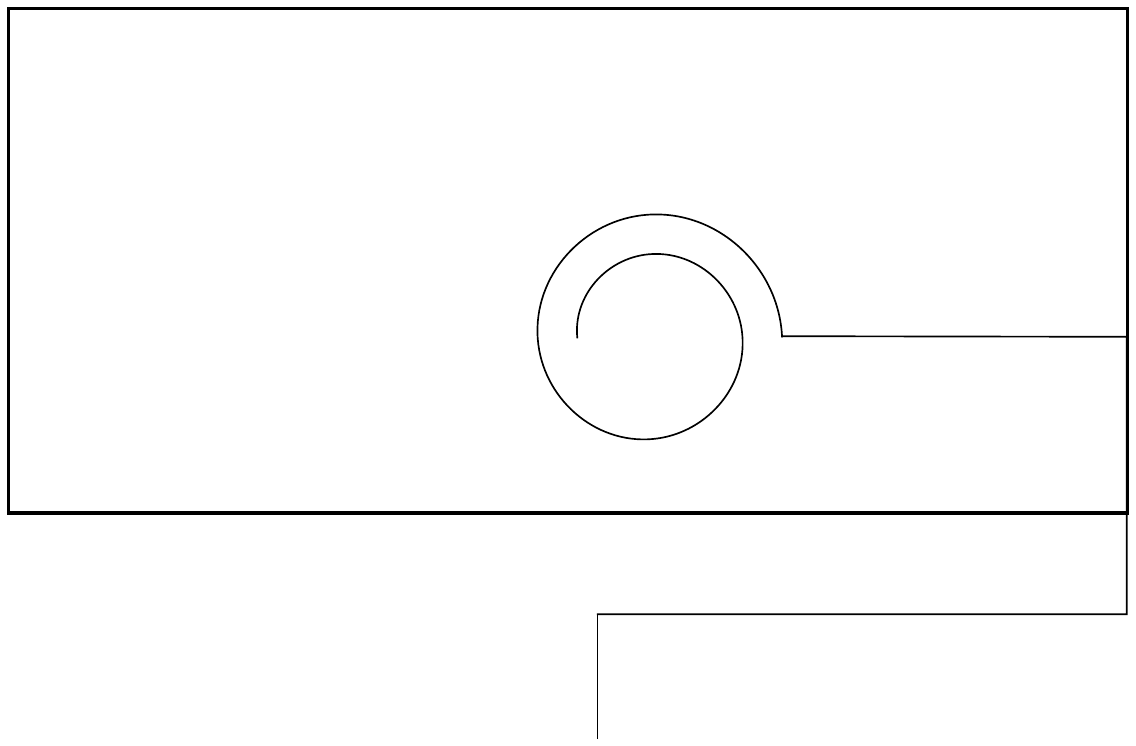}
		\caption{Path $\iota_1$}\label{fig3}
	\end{center}
\end{figure}

Now we want to construct a set $\mathcal{L}$ of crosscuts in the domain $H_\tau$. Generally by crosscut in a domain $D$ we mean a (non-self intersecting) curve from point $A$ to $B$ where $A,B$ are on the boundary of $D$ and the curve between them lies inside $D$. In the present problem, take $z_1 \in H_\tau$ such that $1/2+2/k<\dist(z_1,z)<1-2/k$ and $\dist(z_1,\lambda)>2/k^2$. Write $z_1=z+re^{i\theta}$ where $1/2+2/k<r<1-2/k$. Consider $r_1,r_2$ such that $z_1 \in (z+r_1e^{i\theta},z+r_2e^{i\theta}) \subset H_\tau$ and $z+r_1e^{i\theta},z+r_2e^{i\theta} \in \gamma_\tau$. There is a crosscut in $H_\tau$ which separates $z$ from $\infty$ in $H_\tau$. Here is one way that we can justify this. Take the curve $\tilde{\lambda}$ which is the reverse of $\lambda$. This is a curve starting at $z-1$ ends at $z-1/2$ and has distance at least $1/5k$ to $\lambda$. Also $z$ and $z-1/2$ are on the same connected component of $H_\tau \setminus l$. Now it is easy to see that $\tilde{\lambda}$ will hit $l$ in exactly one point in a transverse way. So if we consider the map $Z_\tau$ it will send $\tilde{\lambda}$ to a curve which hits $Z_\tau(l)$ exactly in one point in a transverse way and can be continued to $\infty$ without hitting $Z_\tau(l)$ again. So by Jordan curve theorem $Z_\tau(z-\frac{1}{2})$ is in the bounded part of $\mathbb{H} \setminus Z_\tau(l)$ which is the claim.\\

\begin{figure}[H]
		
	\begin{center}
		\labellist
		\small
			\pinlabel $z$ [br] at 265 200
			\pinlabel $l$ at 430 193
			\endlabellist
		\includegraphics[width=4in]{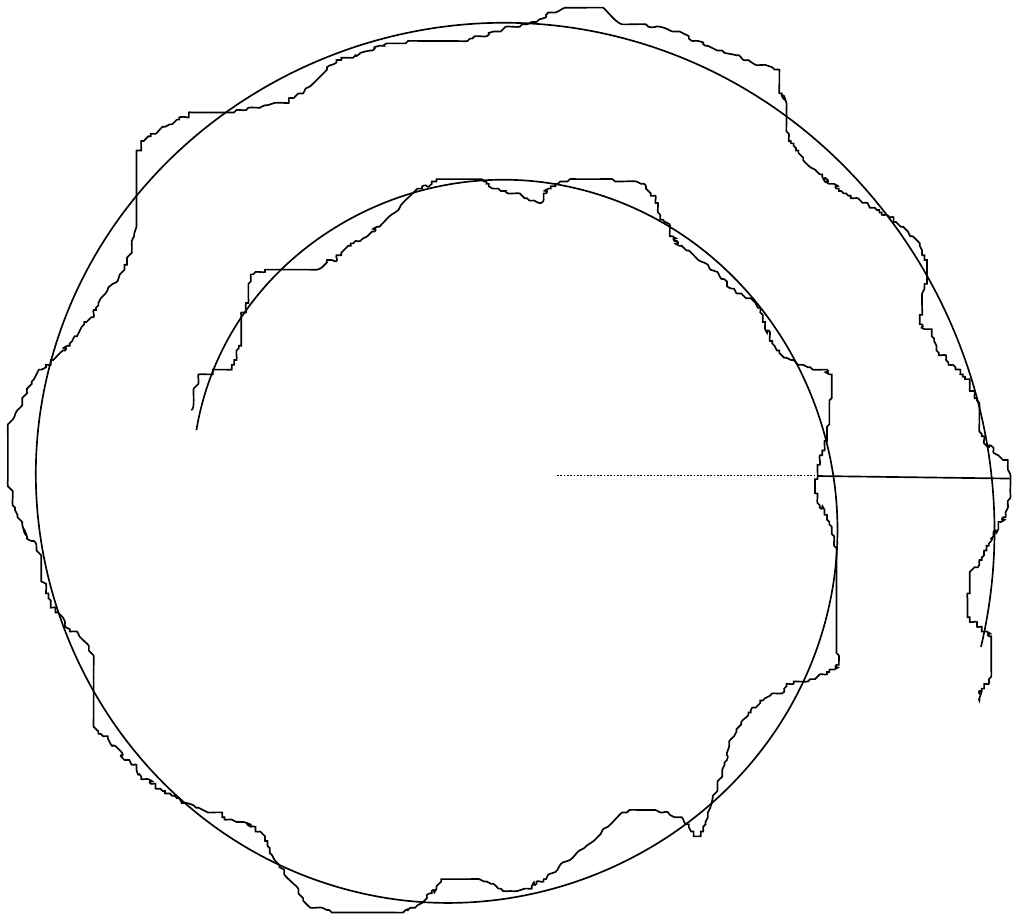}
		\caption{Crosscut $l$}\label{fig4}
	\end{center}
\end{figure}

The reason that we define this family is the following. We want to divide the space in the spiral into regions. Then we show has a positive chance to spend a reasonable time in any of these regions and these events are almost independent. This gives the desired estimate that we are looking for.\\

Take $\mathcal{L}$ be the set of all of possible crosscuts that we get in this way by changing the point $z_1 \in H_\tau$ with the property  $1/2+2/k<\dist(z_1,z)<1-2/k$ that we required above. Note that this is an uncountable set and many $z_1$'s can lie on the same arc so produce the same arc. If $l \in \mathcal{L}$ take $m(l)$ as the midpoint of $l$. We can make $\mathcal{L}$ a metric space by
\[
\dist(l_1,l_2)=\inf\{{\rm length}(\alpha)\;|\; \alpha:[0,1] \rightarrow H_\tau, \alpha(0)=m(l_1), \alpha(1)=m(l_2)\}.
\]
In this metric we can see ${\rm diam}(\mathcal{L}) \geq k/2$. Also if $\dist(l_1,l_2)<1/10$ then $\dist(l_1,l_2) \asymp \dist(m(l_1),m(l_2))$. Also we can make $\mathcal{L}$ an ordered set. We write $l < l'$ if $l'$ separates $l$ from infinity in  $H_\tau \setminus l'$ and $l'<l$ otherwise. To see why this is a well-defined ordered set, consider the map $Z_\tau$. $l$ and $l'$ will go to two crosscuts $l_1=Z_\tau(l)$ and $l'_1=Z_\tau(l')$ in $\mathbb{H}$ that have no intersection. Consider $x_1=l_1 \cap [0,\infty]$ and $y_1=l_1 \cap [-\infty,0]$. Similarly consider $x'_1$ and $y'_1$. Suppose $x_1<x'_1$. Then $l'_1$ is a curve that separates $x_1$ from $\infty$ as well. By this if $l_1$ is not in the bounded part of  $\mathbb{H} \setminus l'_1$ then it has to hit $l'_1$ which is a contradiction since we assumed $l$ and $l'$ have no intersection.\\

Consider $l \in \mathcal{L}$. Since  $\gamma$ goes to $\infty$ w.p.1, there is a first time $\tau_1$ after $\tau$ when it hits $l$. Consider the map $Z_{\tau_1}$ and take $A=f_{\tau_1}(\rm{Re}(z)=0)$. Let $l_1 \in \mathcal{L}$ such that $\dist(l,l_1)=2/k$ and $l<l_1$. Note that $A \cap l_1 \not = \emptyset$ because the imaginary axis is going to $\infty$ in $\mathbb{H}$ so because $l_1$ is a crosscut in $H_{\tau_1}$ we have this. In fact we guess $l_1 \cap A$ has exactly one element but we do not need it. Take $z_0 \in l_1 \cap A$. We claim that $\dist(z_0,\gamma_{\tau_1}) \asymp \frac{1}{k}$. This follows from harmonic measure consideration \eqref{hmeasure} and the Beurling estimate. Put $h^{-1}=bZ_{\tau_1}$ where $b$ is chosen such that $h(z_0)=i/k$. By the Koebe-1/4 theorem we get $|h'(z_0)| \asymp 1$. Now let $B=\{z \in \mathbb{H}| \; \frac{1}{2k}<{\rm Im}(z)<\frac{2}{k}, \frac{\pi}{4}<Arg(z)<\frac{3\pi}{4}\}$. The expected time that SLE spends in $B$, in the natural parametrization,  is $O(k^{-d})$. This is a result of the fact that $\int_B G(z)dA(z)=ck^{-d}$ for some constant $c$. So there is $p>0$ and $c>0$ such that
\[\Prob\{\Theta_\infty(B) \geq ck^{-d} \}>p.\]
$p$ and $c$ are independent of $k$ because of the scale invariance of SLE.\\

Since we have $|h'(z)| \asymp 1$ for all $z \in B$ by the growth theorem,  by Proposition \ref{natscaling} we get (perhaps with a different $c$)
\[\Prob\{\Theta_\infty(h(B)) \geq ck^{-d} \;| \; \mathcal{F}_{\tau_1}\}>p>0,\]
where $p$ and $c$ are independent of $j$ and $k$. Also we get that ${\rm diam}(B)<\frac{c_1}{k}$ for some $c_1$. Now we claim that there is a fixed $N$ large enough, independent of $k$ and $l$ with the following property: Suppose $l_2 \in \mathcal{L}$  with distance $\frac{N}{k}$ of $l$ such that $l<l_2$. Then,
\[\Prob\{\gamma \; {\rm hits} \;\; l_2 \;\; {\rm and \;\; comes\;\; back\;\; and \;\;hits} \;\; B\; |\; \mathcal{F}_{\tau_1}\}<p/2.\]

To justify this, consider $l_3 \in \mathcal{L}$ within distance $\frac{2c_1+2}{k}$ of $l_1$ such that $l<l_3$. Because ${\rm diam}(B)<\frac{c_1}{k}$ if we hit $l_2$ and come back to hit $B$ we have to hit $l_3$. So it is enough to prove
\[\Prob\{\gamma \; {\rm hits} \;\; l_2 \;\; {\rm and \;\; comes\;\; back\;\; and \;\;hits} \;\; l_3\; |\; \mathcal{F}_{\tau_1}\}<p/2.\]

In order to show this we estimate the excursion measure between $l_2$ and $l_3$ then we use \cite{LR} Lemma 2.6 and Lemma 2.7 to conclude the proof. In this case it is easier to estimate extremal distance (see [Section 3.7, \cite{Law1}] for definition). Then the relation between excursion measure and extremal distance gives the needed result. Considering the area $A'$ between $l_2$ and $l_3$ in $H_\tau$ we have
\[
{\rm Area}(A') \leq 10N/k.
\]
We have this because we can put $A'$ in a rectangle of side length $2/k+2/k^2$ and $2N/k$ because $\gamma_\tau \subset S$. Then Lemma 3.74 in \cite{Law1} gives us that the extremal distance between $l_2$ and $l_3$ is at least $cN$ for some $N$ if we assume $k>10N$. This is what we need.\\

In the proof so far we used the measure for SLE in the upper half plane. We should change the measure on $\gamma$ from SLE in the upper half plane to two sided radial SLE towards $w_2$. Notice that these two have the Radon-Nikodym derivative equal to $M_t(w_2)$ by the definition in Section \ref{two-sided}. It is easy to see that  $M_t(w_2)$ for times $t$ after hitting the disk of radius $1/2+\frac{1}{k^2}$ by $\gamma$, is bounded by a universal constant by the conditions that we put on $\lambda$ until we exit $B_{1-2/k}(z)$. Then we change the measure from two sided radial to SLE conditioned to $\tau(L)<\infty$ by Proposition \ref{reduce}.\\

In order to finish the proof of the lemma consider the set of $l_i,\; 1 \leq i \leq \frac{k^2}{100N}$ in $\mathcal{L}$ such that the distance between any two of them is at least $N/k$. Then we can take a set $B_i$ for any of $l_i$ like above. We proved that the probability that we spend $ck^{-d}$ in $B_i$ before hitting the next crosscut is at least $p>0$ independent of the previous events. By this, we get with probability $1/2$, SLE spends at least $ck^{2-d}$ in the spiral for some $c>0$ so by setting $k=\max\{\left(\frac{1}{c\epsilon}\right)^{\frac{1}{2-d}},100N\} $ we get the result.\\

So far we worked with a fixed point $z$ and showed existence of $p$ for that specific point. Note that if we consider another point $z'$ in the ball $B(z)=B_{\frac{1}{100k^2}}(z)$, the proof above goes through if we change $z$ to $z'$ and shows existence of $p'>0$ which works for all $z'$ in $B(z)$. We just cover the domain by disks $B(z)$ and compactness of the domain concludes the proof.

\end{proof}

So it remains just to prove the following lemma.

\begin{lemma}
With the notation as above we have $\Prob[V]>0$.
\end{lemma}
\begin{proof}
In order to show this first we extend $\iota_2$ in the following way. Take line segment $l=[z-1/2,z+1/2]$. Consider a fixed simple curve $\iota_3$ starting at $z-1/2$ in $\mathbb{H}$ which goes to the point $z_3=i9\sqrt{C}$ where $C={\rm hcap}(S)$ and has distance at least $\frac{k}{10}$ to $\iota_2$ and $\iota_3 \cap l=z$ (remember that $S=\{z \in \mathbb{H}\;|\; \dist(z,\iota_2)<\frac{1}{k^2}\}$). We can construct such a curve $\iota_3$ using $\tilde{\lambda}$. Consider
\[\iota_4=\iota_2 \cup l \cup \iota_3,\]
and take $S'=\{z \in \mathbb{H} \;|\; \dist(z,\iota_4)<\frac{1}{k^2}\}$.
Consider the event $V'$ that SLE curve stays in $S'$ until time $2C$ in capacity parameterization. This event has positive probability because if we change the Lowner driving function for $\iota_4$ a little bit we stay in $S'$ by \cite{Law1}[Proposition 4.47] and also ${\rm hcap}(S')>3C$. Also, under this  event, we claim that SLE gets to the ball $B=B_{\frac{1}{k^2}}(z-1/2)$, because otherwise it just stays in $S$, which has capacity $C$. This is a contradiction. So $V' \subset V$ and we are done.
\end{proof}

I want to thank my adviser, Greg Lawler, who introduced the problem and the whole area of SLE to me. Also I want to thank Brent Werness and Dapeng Zhan for many useful conversation about the problem and comments on earlier drafts of it.

\end{document}